\newtheorem{theorem}{Theorem}[section]
\newtheorem{lemma}[theorem]{Lemma}
\newtheorem{proposition}[theorem]{Proposition}
\newtheorem{corollary}[theorem]{Corollary}
\newtheorem{conjecture}{Conjecture}
\newtheorem{observation}[theorem]{Observation}
\date{}
\title{New results on metric-locating-dominating sets of graphs}
\author{A. González\thanks{Departamento de Matemática Aplicada I, Universidad de Sevilla.
E-mail address: gonzalezh@us.es. Partially supported by project MTM2014-60127-P.},
C. Hernando\thanks{Departament de Matemàtiques, Universitat Politècnica de Catalunya.
 E-mail addresses: ${\rm \{}$carmen.hernando,merce.mora${\rm \}}$@upc.edu. Partially supported by projects MTM2012-30951/FEDER and Gen. Cat. DGR 2014SGR46.},
M. Mora$^{\dag}$
}
\begin{document}
\maketitle

\begin{abstract}
A dominating set $S$ of a graph is a metric-locating-dominating set if each vertex of the graph is
 uniquely distinguished by its distances from the elements of $S$,
 and the minimum cardinality of such a set is called the metric-location-domination number.
   In this paper, we undertake a study that,
  in general graphs and specific families, relates
  metric-locating-dominating sets to other special sets: 
  resolving sets, dominating sets, locating-dominating sets and doubly resolving sets.
    We first characterize classes of trees according to certain relationships between
  their metric-location-domination number and their
  metric dimension and domination number.
   Then, we show different methods to transform metric-locating-dominating sets
        into 
 locating-dominating sets and doubly resolving sets.
  Our methods 
  produce new 
   bounds
 on  the minimum cardinalities of all those sets,
 some of them involving parameters that have not been related so far.

\vspace{0.25cm}
\noindent{\em Keywords:} metric-locating-dominating set, resolving set, dominating set, locating-dominating set, doubly resolving set.
\end{abstract}

\section{Introduction and preliminaries}

Let $G=(V(G),E(G))$ be a finite, simple, undirected, and connected graph  of  order $n=|V(G)|\geq 2$;
 the {\em distance} $d(u,v)$ between two vertices $u,v\in V(G)$ is the length of a shortest $u$-$v$ path.
  We say that  a subset $S\subseteq V(G)$ is a
 {\em resolving set} 
 of $G$ if for every $x,y\in V(G)$ there is a vertex $u\in S$
such that  $d(u,x)\neq d(u,y)$ (it is said that $S$ {\em  resolves} $\{x,y\}$), and the minimum cardinality of such a set is called the {\em metric dimension} of $G$,
 written as ${\rm dim} (G)$.
 When $S$ is also a {\em dominating set} of $G$ (i.e., every $x\in V(G)\setminus S$ has a neighbor in $S$), then
 $S$ is called a {\em metric-locating-dominating set} (MLD-set for short).
   The {\em metric-location-domination number} (resp., {\em domination number}),
    written as $\gamma_M(G)$ (resp., $\gamma(G)$),  is the minimum cardinality of an MLD-set (resp., dominating set) of $G$.

MLD-sets were introduced in 2004 by Henning and Oellermann~\cite{HenningOellermann}
   combining the usefulness of resolving sets,
 that roughly speaking differentiate the vertices of a graph, 
and
dominating sets, which cover the whole vertex set.  
Resolving sets 
 were defined in the 1970s by Slater~\cite{slater}, and independently by Harary and Melter~\cite{melter},
whereas dominating sets were introduced  in the 1960s by Ore~\cite{ore}.
Both types of sets have received much attention in the literature 
 because of their many and varied applications in other areas;
 for example, resolving sets serve as a tool for robot navigation, and dominating sets
 are  helpful to design and analyze communication networks
  (see~\cite{bailey} and \cite{fod} for more  information on  resolvability and domination).
  Although 
  MLD-sets are 
  hard to handle,
  for entailing the complexity of the  other two concepts,
   they have been studied in several papers,
 for instance \cite{LDcodes,nordhaus-gaddum}, 
 and further  generalized in other works such as  \cite{McCoy,Stephen}.

This paper first focuses on the intrinsic relations between MLD-sets and resolving sets and dominating sets.
Indeed, the corresponding parameters for all those sets satisfy by definition
 \begin{equation}\label{exp:MLD_L_D} \max\{{\rm dim}(G),\gamma(G)\}\leq \gamma_M(G)\leq {\rm dim}(G)+\gamma(G).\end{equation}
We consider here this chain restricted to trees; specifically, we characterize
the trees for which equality occurs in (\ref{exp:MLD_L_D}), thereby continuing the work of Henning and Oellermann~\cite{HenningOellermann} that  characterized the trees
$T$ with $\gamma_M(T)=\gamma(T)$.
 Analog  characterizations of trees in terms of other  related invariants can be found in \cite{blidia,RNG}.

We also compare MLD-sets with other subsets of vertices defined by Slater~\cite{slaterLD} that are naturally connected to them:
  the {\em locating-dominating sets}.
  They
  are dominating sets that distinguish vertices by using neighborhoods instead of distances;
 more formally,
 a {\em locating-dominating set} (LD-set for brevity) of  $G$ is a dominating set $S\subseteq V(G)$ so that $N(x)\cap S\neq N(y)\cap S$
   for   every $x,y\in V(G)\setminus S$. The minimum cardinality of such a set, denoted by $\gamma_L(G)$, is the
   {\em location-domination number} of $G$.
Clearly, an LD-set is an MLD-set, and so it is also a resolving set; consequently, 
\begin{equation}\label{exp:3chain}{\rm dim }(G)\leq   \gamma_M  (G)\leq\gamma_L(G).\end{equation}
Regarding  the relation between $\gamma_M(G)$ and $\gamma_L(G)$, we propose a way  to
   obtain   LD-sets from MLD-sets  which   helps us to
       extend the following result due to  Henning and Oellermann. 
     (See \cite{LDcodes,nordhaus-gaddum} for more properties of chain (\ref{exp:3chain}), and
   \cite{fod} for specific  results  on LD-sets.)
     \begin{theorem}{\rm \cite{HenningOellermann}}\label{th:HO_MLD_LD}
For any tree $T$, it holds that $\gamma_L(T)<~2\gamma_M(T)$.
However, there is no constant $c$ such that $\gamma_L(G)\leq c\gamma_M(G)$ for all graphs $G$.
\end{theorem}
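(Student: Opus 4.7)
The plan is to prove the two claims of Theorem~\ref{th:HO_MLD_LD} separately.

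For the tree inequality, let $S$ be a minimum MLD-set of $T$. I want to enlarge $S$ into an LD-set $S^*$ by adding strictly fewer than $|S|$ vertices. The key structural observation is that, in a tree, two distinct vertices $x,y\in V(T)\setminus S$ satisfying $N(x)\cap S=N(y)\cap S$ must have $|N(x)\cap S|=1$; otherwise two vertices of $S$ common to both $N(x)$ and $N(y)$ would close a $4$-cycle in $T$. Hence the obstructions to $S$ being an LD-set live inside the sets $C(v):=\{x\in V(T)\setminus S:N(x)\cap S=\{v\}\}$, indexed by $v\in S$. A second observation is that for each $v\in S$ with $|C(v)|=k\geq 2$, the elements of $C(v)$ lie in $k$ distinct components of $T-v$ and at least $k-1$ of them contain another vertex of $S$, since otherwise two elements of $C(v)$ in $S$-free components would be equidistant from every vertex of $S$, contradicting the resolving property.

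The main task is then to add enough vertices to break the ties while keeping the number of additions below $|S|$. The naive strategy (including $|C(v)|-1$ elements of $C(v)$ for each offending $v$) need not suffice—for example, the spider obtained by attaching three-edge branches to a central vertex shows that this can produce exactly $|S|$ additions. The refinement is to allow inclusions of vertices from $V(T)\setminus\bigl(S\cup\bigcup_v C(v)\bigr)$: a vertex $u$ adjacent to elements of several different sets $C(v_1),\dots,C(v_r)$ shrinks all of them simultaneously when placed in $S^*$. I would formalize the bookkeeping via a charging argument in which each vertex added to $S^*\setminus S$ is assigned injectively to a distinct vertex of $S$ in the relevant witness component (whose existence is ensured by the second observation), using a rooted structure on $T$ to keep the charges arising from different base vertices disjoint and to leave at least one vertex of $S$ uncharged. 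This yields $|S^*\setminus S|\leq|S|-1$ and hence $\gamma_L(T)\leq|S^*|<2|S|=2\gamma_M(T)$. The technical hurdle is precisely the injectivity of the charging, since the subtrees hanging off distinct $v,v'\in S$ may overlap in $T$.

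For the second assertion, the plan is to exhibit an explicit family $\{G_k\}_{k\geq 1}$ of graphs with $\gamma_L(G_k)/\gamma_M(G_k)\to\infty$. The guiding intuition is that distance information of length at least two can separate pairs of vertices sharing the same first-neighborhood in a candidate MLD-set; thus a graph in which a small vertex subset resolves every other vertex only through distances larger than one, while many non-set vertices collapse into common neighborhood-twin classes with respect to that subset, cleanly separates $\gamma_M$ from $\gamma_L$. Producing such a family (the construction given in the cited Henning--Oellermann paper is of this kind) and verifying that $\gamma_M(G_k)$ remains bounded or logarithmic in $k$ while $\gamma_L(G_k)$ grows linearly in $k$ completes the proof. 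The main difficulty is designing the gadgetry so that the neighborhood-twin classes become distinguishable at distance $\geq 2$ but not at distance $1$, since generic constructions tend to inflate both parameters in lock-step.
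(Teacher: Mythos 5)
This statement is quoted from Henning and Oellermann (it carries the citation \cite{HenningOellermann}) and the paper contains no proof of it, so there is nothing in the text to compare your argument against; I can only assess the proposal on its own terms. On those terms, both halves have genuine gaps.

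For the tree inequality, your structural observations are correct: in a tree, two undominated-by-the-same-pair vertices cannot share two $S$-neighbours without creating a $C_4$, so all conflicts live in the classes $C(v)$, and the resolving property does force at least $k_v-1$ of the components of $T-v$ meeting $C(v)$ to contain another vertex of $S$. But the theorem is equivalent to the assertion that one can repair all conflicts by adding at most $|S|-1$ vertices, and that is exactly the step you leave open. The naive repair really does fail: for $T=P_6$ with minimum MLD-set $S=\{u_2,u_5\}$ one has $C(u_2)=\{u_1,u_3\}$ and $C(u_5)=\{u_4,u_6\}$, so adding one vertex per class yields $|S^*|=4=2\gamma_M(P_6)$, whereas $\gamma_L(P_6)=3$; the fix (add $u_3$, which simultaneously leaves $C(u_2)$ and separates $u_4$ from $u_6$) is precisely the kind of cross-class interaction your charging scheme must handle, and you describe the scheme only as a plan whose injectivity is "the technical hurdle." Since that hurdle is the whole content of the strict bound, the first half is not proved.

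For the second assertion you exhibit no family at all; you describe the intuition and then point back to the construction "given in the cited Henning--Oellermann paper," which is circular as a proof. Note, however, that the family $G_s$ built in the proof of Theorem~\ref{th:Phi} of this paper already does the job: inequalities (\ref{eq:G_t1}) and (\ref{eq:G_t2}) give $\gamma_M(G_s)\leq s+3$ while $\gamma_L(G_s)\geq 2^{s+1}-1$, so $\gamma_L(G_s)/\gamma_M(G_s)\to\infty$ and no constant $c$ can satisfy $\gamma_L(G)\leq c\,\gamma_M(G)$ for all $G$. Invoking that construction (or writing out an analogous one with full verification of both parameters) would close the second gap; the first still requires a complete, verified charging argument.
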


We finally  find relationships between MLD-sets  and other  subsets for which,
 so far as we are aware,
 no direct connection is known: the {\em doubly resolving sets}.
  Cáceres et al.~\cite{cartesian} introduced doubly resolving sets
as a tool for
 computing the metric dimension of Cartesian products of graphs.
 These sets,
that somehow distinguish vertices in two ways by means of distances, are formally defined as follows.
 Two vertices  $u,v\in V(G)$ {\em doubly resolve} a pair $\{x,y\}\subseteq V(G)$ if $d(u,x)-d(u,y)\neq d(v,x)- d(v,y)$.
A set $S\subseteq V(G)$
is a {\em doubly resolving set}
  of $G$ if   every pair $\{x,y\}\subseteq V(G)$ is doubly resolved by two vertices of $S$
  (it is said that $S$ {\em doubly resolves} $\{x,y\}$),
and the minimum cardinality of such a set is denoted by $\psi(G)$.
 Thus,  a doubly resolving set is also a resolving set,  and so
\begin{equation}\label{exp:psi}{\rm dim }(G)\leq \psi(G).\end{equation}

Although it is not straightforward  to deduce  any relation between  $\psi(G)$ and $\gamma_M(G)$ from their definitions,
 we provide here 
  bounds on $\psi(G)$ in terms of $\gamma_M(G)$
 by  generating  doubly resolving sets from MLD-sets.
 We thus  obtain, for specific classes and general graphs, similar chains to  expression (\ref{exp:3chain})  that include  $\psi(G)$.
  (See for example~\cite{doubly,kratica2,kratica4} 
   for more   results on doubly resolving sets and relations with other types of sets.)

The   paper is organized as follows.
In Section~\ref{sec:trees}, we 
   characterize the extremal graphs of
 expression (\ref{exp:MLD_L_D}) restricted to trees.
 We then show in Sections~\ref{sec:MLD_LD} and  \ref{sec:doubly_MLD}
 how to construct LD-sets and doubly resolving sets from MLD-sets in arbitrary graphs and specific families, thus producing bounds on
 the corresponding parameters.
 Specifically, we prove in Section~\ref{sec:MLD_LD} that $\gamma_L(G)\leq \gamma_M^2(G)$ whenever $G$ has no cycles of length 4 or 6
 but, for arbitrary graphs,   any upper bound on $\gamma_L(G)$
in terms of $\gamma_M(G)$ has growth at least exponential; in
Section~\ref{sec:doubly_MLD}  we achieve the bounds $\psi(G)\leq \gamma_M(G)$ for graphs  $G$ with girth at least 5, and
$\psi(G)\leq \gamma_M(G)+\gamma(G)$ for any graph $G$.
We conclude the paper with  some remarks and open  problems  in Section~\ref{sec:CCRR}.

\section{MLD-sets of trees}\label{sec:trees}

Henning and Oellermann~\cite{HenningOellermann}
provided a formula for the metric-location-domination number of trees  and characterized the trees $T$ with $\gamma_M(T)=\gamma(T)$,
 giving both results   in terms of support vertices (see Theorem~\ref{th:HO_trees} below).
 Recall that 
a vertex $u$ of a tree $T$ 
is a {\em support vertex} whenever it is adjacent to some {\em leaf}  (i.e., a vertex of degree 1),
and it is a {\em strong support vertex} if there are two or more leaves adjacent  to  $u$.
We denote by ${\cal S}(T)$ (resp., ${\cal S}'(T)$) the set of support (resp., strong support) vertices of $T$;
  $\ell'(T)$ is the number of leaves  adjacent to a strong support vertex.

\begin{theorem}{\rm \cite{HenningOellermann}}\label{th:HO_trees}
For any tree $T$, the following statements hold:
\begin{enumerate}
\item[(i)] $\gamma_M(T) = \gamma(T) + \ell'(T) -|{\cal S}'(T)|$.
\item[(ii)]  $\gamma_M(T) = \gamma(T)$ if and only if ${\cal S}'(T)=\emptyset$.
\end{enumerate}
\end{theorem}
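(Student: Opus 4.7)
My plan is to reduce part~(ii) to part~(i): since $\ell'(T)-|{\cal S}'(T)| = \sum_{u\in {\cal S}'(T)}(k_u-1)$, where $k_u\geq 2$ denotes the number of leaves at a strong support $u$, this quantity vanishes exactly when ${\cal S}'(T)=\emptyset$. Thus the work lies in proving the formula in (i), which I split into two matching inequalities.

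For the lower bound $\gamma_M(T) \geq \gamma(T)+\ell'(T)-|{\cal S}'(T)|$, I begin with the ``false twin'' observation: two leaves sharing a common support have identical open neighborhoods and hence identical distance vectors to every other vertex, so any resolving set of $T$ must contain all but at most one of them; in particular, every MLD-set has at least $k_u - 1$ leaves of each strong support $u$. Now let $S$ be a minimum MLD-set, let $A$ be the set of leaves adjacent to strong supports, and define $D = (S\setminus A)\cup {\cal S}'(T)$. I would verify that $D$ is dominating: leaves in $A$ are covered by their (strong) supports in $D$, and any vertex $v\notin D$ that had a dominator $\ell\in S\cap A$ must be the unique neighbor of $\ell$, namely $\ell$'s strong support, which forces $v\in {\cal S}'(T)\subseteq D$, a contradiction. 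For the cardinality, notice that if a strong support $u$ has only $k_u-1$ leaves in $S$ (the minimum allowed by twins), then the uncovered leaf forces $u\in S$; combining this with the twin lower bound in a short count yields $|D|\leq |S|-\ell'(T)+|{\cal S}'(T)|$, which rearranges into the desired inequality.

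For the upper bound $\gamma_M(T)\leq \gamma(T)+\ell'(T)-|{\cal S}'(T)|$, I exhibit an MLD-set of the right size. Take a minimum dominating set $D^*$ of $T$ that contains every support vertex---a standard fact for trees, since any leaf in $D^*$ can be swapped for its support without enlarging the set---and set $S = D^* \cup L$, where $L$ consists of $k_u-1$ leaves of each strong support $u$. The cardinality matches ($D^*$ contains no leaves by minimality, so $D^*\cap L=\emptyset$), and $S$ is dominating because $S\supseteq D^*$. The resolving condition I would verify by cases on a pair $\{x,y\}$: leaves of a common strong support are separated by the added leaves; leaves of distinct supports are separated by either support (distances $1$ versus at least $2$); a leaf $x$ of support $u$ and a vertex $y$ with $d(u,y)\neq 1$ are separated by $u$; while a leaf $x$ of $u$ and a non-leaf $y$ adjacent to $u$ are separated by any support found by descending $T$ from $y$ on the side away from $u$.

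The main obstacle is the remaining case, where $x$ and $y$ are both non-leaf internal vertices lying outside $S$. Since all supports are in $S$, neither $x$ nor $y$ is a support, so each has an off-path neighbor that is itself a non-leaf; descending into the subtree hanging from $x$ away from the $x$-$y$ path reaches a leaf of $T$ whose support $s$ then sits in $S$. The identity $d(s,x)-d(s,y) = d(p(s),x)-d(p(s),y)$ for the closest point $p(s)$ of $s$ on the $x$-$y$ path, combined with the fact that $p(s)=x$ is an endpoint (and thus not the midpoint) of that path, shows that $s$ resolves $\{x,y\}$. Making this projection argument watertight, while correctly handling the parity of the $x$-$y$ distance, is the crux of the upper bound.
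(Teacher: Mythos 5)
This statement is quoted in the paper from Henning and Oellermann's work and is not proved there, so there is no in-paper argument to compare yours against; I can only judge your reconstruction on its own terms, and it is essentially sound. Your reduction of (ii) to (i) via $\ell'(T)-|{\cal S}'(T)|=\sum_{u\in{\cal S}'(T)}(k_u-1)$ is correct. For the lower bound, the count you leave implicit does close: writing $a_u=|S\cap A_u|$, the twin and domination observations give $a_u+[u\in S]\geq k_u$ for each strong support $u$, hence $|S\cap A|\geq \ell'(T)-|{\cal S}'(T)\cap S|$, and since strong supports are not leaves, $|D|=|S|-|S\cap A|+|{\cal S}'(T)\setminus S|\leq |S|-\ell'(T)+|{\cal S}'(T)|$; your domination check for $D$ is also fine. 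For the upper bound, the ``crux'' you flag is genuinely fine and needs no parity discussion: a vertex $s$ resolves $\{x,y\}$ in a tree iff its gate on the $x$-$y$ path is not equidistant from $x$ and $y$, and your $s$ has gate exactly $x$, so $d(s,y)=d(s,x)+d(x,y)>d(s,x)$ outright. The one step you should make explicit is why the hanging subtree at $x$ contains a support \emph{distinct from} $x$: since $x\notin S\supseteq {\cal S}(T)$, no neighbour of $x$ is a leaf, so the off-path branch has depth at least $2$ and its deepest vertex is a leaf of $T$ whose support lies strictly inside the branch and hence in $D^*$. Two trivial caveats worth a sentence each in a full write-up: the standard fact that some minimum dominating set contains all supports (and then, by minimality, no leaves, so $D^*\cap L=\emptyset$) fails only for $P_2$, where ${\cal S}'=\emptyset$ and the formula is immediate; and in the leaf-versus-adjacent-non-leaf case you should note $y\notin S$ forces $y$ not to be a support before descending. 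With those details inserted, your proof is complete and follows the natural route (false-twin lower bound plus an explicit dominating-set-with-added-leaves construction), which is in the same spirit as the original source.
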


As the authors observed, any MLD-set 
must contain, for each support vertex $u$,  either
 all the leaves adjacent to $u$ or all but one
of the leaves adjacent to $u$ as well as   vertex $u$. This
  observation leads us to see that 
  $\ell(T)\leq \gamma_M(T)$, where
$\ell(T)$ denotes the total number of leaves of any tree~$T$.
  Hence, since ${\rm dim} (T)<\ell(T)$ (see   \cite{chartrand}), expression~(\ref{exp:MLD_L_D}) now becomes
 \begin{equation}\label{exp:MLD_L_D_trees}
 \max\{\ell(T),\gamma(T)\}\leq \gamma_M(T)\leq {\rm dim}(T)+\gamma(T).
 \end{equation}

This section follows the same spirit as Henning and Oellermann~\cite{HenningOellermann},
who characterized in statement (ii) of Theorem~\ref{th:HO_trees}
 the extremal trees for expression (\ref{exp:MLD_L_D_trees}) with $\gamma_M(T)=\gamma(T)$.
 Indeed, we characterize the remaining extremal cases:  $\gamma_M (T)={\rm dim}(T)+\gamma(T)$ in Theorem~\ref{th:char1},  and
$\gamma_M (T)=\ell(T)$ 
   in Theorem~\ref{th:char2}. To do this,
 we first recall the following terminology  extracted from~\cite{chartrand}.
A vertex $u\in V(T)$ of degree at least 3 is called a {\it major vertex} of $T$, and
a   leaf  $x\in V(T)$ is a {\it terminal vertex} of 
$u$ if
the major vertex closest to $x$   is $u$.
 The {\it terminal degree} of $u$, written as ${\rm ter}(u)$, is the number of its terminal vertices, and $u$ is an
{\it exterior major vertex} of $T$ if it has positive terminal degree;
 we denote by   ${\rm Ex}(T)$ the set of exterior major vertices of $T$.

\begin{theorem}\label{th:char1} Let $T$ be a tree different from a path. Then, the following statements are equivalent:
\begin{enumerate}
\item[(i)] $\gamma_M (T)={\rm dim}(T)+\gamma(T)$.
\item[(ii)] ${\rm dim}(T)=\ell'(T)-|{\cal S}'(T)|$.
\item[(iii)] Every $u\in {\rm Ex}(T)$ with  ${\rm ter}(u)\geq 2$ is the support vertex of each of  its terminal vertices.
\item[(iv)] Any path joining two leaves of $T$ at distance greater than 2 contains at least two major vertices.

\end{enumerate}
\end{theorem}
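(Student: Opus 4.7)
The plan is to establish the four-way equivalence via the chain (i)$\Leftrightarrow$(ii), (iii)$\Leftrightarrow$(iv) and (ii)$\Leftrightarrow$(iii). Two ingredients will be used throughout: Theorem~\ref{th:HO_trees}(i), which gives $\gamma_M(T)-\gamma(T)=\ell'(T)-|{\cal S}'(T)|$, and the classical Chartrand--Eroh--Johnson--Oellermann formula for a tree $T$ different from a path,
\[
{\rm dim}(T)=\ell(T)-|{\rm Ex}(T)|=\sum_{u\in{\rm Ex}(T)}({\rm ter}(u)-1),
\]
which fits the terminology already recalled from \cite{chartrand}.

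The equivalence (i)$\Leftrightarrow$(ii) is immediate by substituting Theorem~\ref{th:HO_trees}(i) into the identity in (i) and cancelling $\gamma(T)$. For (iii)$\Leftrightarrow$(iv) I would first observe that, since $T$ is not a path, every leaf-to-leaf path of $T$ contains at least one major vertex. So the leaf-to-leaf paths containing exactly one major vertex $u$ are exactly those joining two terminal vertices of $u$, and their length equals $d(x,u)+d(u,y)\geq 2$. Hence (iv) is equivalent to saying that whenever two leaves are terminal vertices of a common $u\in{\rm Ex}(T)$, both are adjacent to $u$. Applied to every pair of terminal vertices of each $u$ with ${\rm ter}(u)\geq 2$, this is just (iii), and conversely (iii) implies the pairwise statement at once.

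The core step is (ii)$\Leftrightarrow$(iii), which I handle with a tight double-counting argument. Partition ${\rm Ex}(T)=A\cup B$ with $B=\{u\in{\rm Ex}(T):{\rm ter}(u)\geq 2\}$, so that the displayed formula gives ${\rm dim}(T)=\sum_{u\in B}({\rm ter}(u)-1)$. Because $T$ is not a path, any strong support vertex has degree at least $3$ (its only neighbours cannot all be leaves) and therefore is a major vertex with at least two terminal leaves, giving ${\cal S}'(T)\subseteq B$. Writing $m(v)$ for the number of leaves adjacent to $v\in{\cal S}'(T)$, the fact that a leaf adjacent to a major vertex $v$ is automatically a terminal vertex of $v$ yields $m(v)\leq{\rm ter}(v)$, whence
\[
\ell'(T)-|{\cal S}'(T)|=\sum_{v\in{\cal S}'(T)}(m(v)-1)\leq\sum_{v\in B}({\rm ter}(v)-1)={\rm dim}(T),
\]
with equality if and only if both ${\cal S}'(T)=B$ and $m(v)={\rm ter}(v)$ for every $v\in B$. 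These two equality conditions together say exactly that every $u\in{\rm Ex}(T)$ with ${\rm ter}(u)\geq 2$ is a strong support vertex adjacent to all of its terminal vertices, which is (iii).

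The main obstacle I anticipate is this last step: the two equality conditions must be shown to repackage cleanly into the single statement (iii), and the non-path hypothesis has to be invoked both to place ${\cal S}'(T)$ inside ${\rm Ex}(T)$ and to apply the dimension formula. The other two links are essentially a substitution and a translation of distances into the language of terminal vertices.
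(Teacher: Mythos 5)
Your proof is correct, and while the links (i)$\Leftrightarrow$(ii) and (iii)$\Leftrightarrow$(iv) coincide with the paper's (a substitution of Theorem~\ref{th:HO_trees}(i), and a translation of ``exactly one major vertex on a leaf-to-leaf path'' into ``two terminal vertices of a common exterior major vertex''), your treatment of the core equivalence (ii)$\Leftrightarrow$(iii) is genuinely different. The paper works structurally with a concrete minimum resolving set $S$ (all but one terminal vertex of each exterior major vertex), first deducing ${\rm dim}(T)\geq \ell'(T)-|{\cal S}'(T)|$ from the fact that ${\cal S}'(T)\subseteq {\rm Ex}(T)$, and then, in the equality case, running a contradiction argument about pairs $\{v,v'\}$ and $\{v,z\}$ that $S$ would fail to resolve if some terminal vertex were at distance $\geq 2$ from its exterior major vertex. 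You instead invoke the closed-form ${\rm dim}(T)=\sum_{u\in{\rm Ex}(T)}({\rm ter}(u)-1)$ (the formula version of the same Theorem~5 of \cite{chartrand} the paper cites) and reduce everything to the counting inequality $\sum_{v\in{\cal S}'(T)}(m(v)-1)\leq\sum_{v\in B}({\rm ter}(v)-1)$, reading (iii) off as the exact equality condition. Your route buys a cleaner, case-free argument: the only delicate points are ${\cal S}'(T)\subseteq B$ and $m(v)\leq{\rm ter}(v)$, both of which you justify correctly using the non-path hypothesis, and the repackaging of the two equality conditions into (iii) works because ``adjacent leaves'' form a subset of ``terminal vertices,'' so equal cardinalities force equal sets. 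The paper's route, by contrast, makes visible \emph{why} a resolving set of that size exists or fails to exist, which is arguably more informative but requires the ad hoc resolving-set contradictions. Both arguments lean on the same external result from \cite{chartrand}, just in structural versus numerical form.
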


\begin{proof}
 \noindent({\it i $\Longleftrightarrow$ ii}) This equivalence  is guaranteed by statement~(i) of Theorem~\ref{th:HO_trees}.

\noindent({\it ii $\Longleftrightarrow$ iii}) It is known that any set $S\subseteq V(T)$  composed
by  all but one of the terminal vertices of each
$u\in {\rm Ex}(T)$  is a minimum resolving set of $T$ 
 (see Theorem 5 of \cite{chartrand} and its proof).
 Thus, let $S\subseteq V(T)$ be such a set, and  note
  that, as any strong support vertex $u$   belongs to ${\rm Ex}(T)$, then  $S$ must contain all but one of the leaves adjacent to $u$;
  consequently,  ${\rm dim}(T)\geq \ell'(T)-|{\cal S}'(T)|$.
 Hence, if ${\rm dim}(T)=\ell'(T)-|{\cal S}'(T)|$ then $S$ is only formed by all but one of the
leaves adjacent to each strong
support vertex.

Let $u\in {\rm Ex}(T)$ with  ${\rm ter}(u)\geq 2$, and let $x$ and $y$ be two terminal vertices of $u$.
 On the contrary, let us assume for instance that $x\not\in N(u)$, and let
$v$ be the only neighbor of $u$ in the  $u$-$x$ path.
 Clearly, $u$ is the support vertex of $y$ because
otherwise there is $v'\in N(u)\setminus \{y\}$ in the $u$-$y$ path
but $\{v,v'\}$ would not be resolved by $S$ since no vertex in either the $u$-$x$ path or the
$u$-$y$ path is in $S$ (by construction of set $S$). Further, one of the leaves adjacent to $u$, say $z$, is not in $S$
 but reasoning as before yields that $S$ does not resolve  $\{v,z\}$; a contradiction since $S$ is a resolving set of $T$.
 Therefore,  $u$  is the support vertex of each of  its terminal vertices.

Reciprocally, let us suppose that each $u\in {\rm Ex}(T)$ with ${\rm ter}(u)\geq 2$ is the support vertex
of its terminal vertices, which implies that $u\in {\cal S}'(T)$.
 Thus, any set  $S\subseteq V(T)$  composed by all but one of the leaves adjacent to each  strong
support vertex 
is a resolving set since   it contains all but one of the terminal vertices of each exterior major vertex.
 Moreover, $|S|=\ell'(T)-|{\cal S}'(T)|$ is minimum since we have noticed in the beginning of this proof
 that  ${\rm dim}(T)\geq \ell'(T)-|{\cal S}'(T)| $.
 Therefore, ${\rm dim}(T)=\ell'(S)-|{\cal S}'(T)|$.

\noindent({\it iii $\Longleftrightarrow$ iv})
Let us suppose that any $u\in {\rm Ex}(T)$ with ${\rm ter}(u)\geq 2$ is adjacent to each of its terminal vertices.
Given two
leaves $x$ and $y$  at distance greater than 2, there is at least one major vertex $u$ in the $x$-$y$ path since $G$ is not
isomorphic to a path. Further, this vertex $u$ cannot be unique since otherwise $x$ and $y$ are terminal vertices of $u$ with either  $d(u,x)\geq 2$ or $d(u',y)\geq 2$, that is,
 $u$ is not the support vertex of either $x$ or $y$, which is impossible. Reciprocally, let us assume statement (iv) to hold true, and let
   $u\in {\rm Ex}(T)$ with ${\rm ter}(u)\geq 2$. It is easily seen that no terminal vertex $x$ of $u$ verifies  $d(u,x)\geq 2$ since otherwise
 $d(x,y)\geq 3$ for any other terminal vertex $y$ of $u$, setting $u$ as the only major vertex in the $x$-$y$ path,  which contradicts statement (iv).
\end{proof}

We want to remark that, although the preceding result does not consider paths, it is easy to check that
     $P_3$ is the only path satisfying $\gamma_M(P_n)=\gamma(P_n)+{\rm dim}(P_n)$ since $\gamma_M(P_n)=\gamma (P_n)$ whenever $n\neq 3$ (by statement (i) of Theorem~\ref{th:HO_trees}), and ${\rm dim}(P_n)=1$  for all $n\geq 1$ (see for instance~\cite{chartrand}).

\begin{theorem}\label{th:char2}
Let $T$ be a tree  different from the path $P_2$. Then, the following statements are equivalent:
\begin{enumerate}
\item[(i)] $\gamma_M(T)=\ell(T)$.
\item[(ii)] $\gamma(T)=|{\cal S}(T)|$.
\item[(iii)] For every $u\in V(T)$, there exists a leaf at
distance at most 2 from $u$.
\end{enumerate}
\end{theorem}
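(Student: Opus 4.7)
The plan is to establish (i) $\Leftrightarrow$ (ii) by an algebraic manipulation of Theorem~\ref{th:HO_trees}(i), and then to prove (ii) $\Leftrightarrow$ (iii) by a direct dominating-set argument together with its contrapositive.

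For (i) $\Leftrightarrow$ (ii), the key observation is that, since $T\neq P_2$, every leaf has a unique support vertex and support vertices are themselves not leaves. Partitioning the leaves according to their support vertex yields
\[
\ell(T)\;=\;\ell'(T)\;+\;\bigl(|{\cal S}(T)|-|{\cal S}'(T)|\bigr),
\]
because the strong supports account jointly for the $\ell'(T)$ leaves and each non-strong support accounts for exactly one leaf. Plugging this identity into the formula $\gamma_M(T)=\gamma(T)+\ell'(T)-|{\cal S}'(T)|$ provided by Theorem~\ref{th:HO_trees}(i), the equality $\gamma_M(T)=\ell(T)$ reduces at once to $\gamma(T)=|{\cal S}(T)|$.

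For (iii) $\Rightarrow$ (ii), any vertex lying within distance $2$ of a leaf lies within distance $1$ of some support vertex, so (iii) forces ${\cal S}(T)$ to be a dominating set; hence $\gamma(T)\leq|{\cal S}(T)|$. For the reverse inequality, for each $u\in{\cal S}(T)$ let $L(u)$ denote the set of leaves adjacent to $u$ and consider the block $B_u:=\{u\}\cup L(u)$. Since $T\neq P_2$, the blocks $\{B_u\}_{u\in{\cal S}(T)}$ are pairwise disjoint, and every dominating set $D$ must intersect each $B_u$, because each leaf in $L(u)$ can only be dominated by itself or by $u$. Thus $|D|\geq|{\cal S}(T)|$, giving $\gamma(T)=|{\cal S}(T)|$.

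For (ii) $\Rightarrow$ (iii), I argue by contrapositive. If some vertex $w$ has no leaf within distance $2$, then $w$ is neither a leaf nor adjacent to any support vertex, so both $w$ and all its neighbors lie outside $\bigcup_{u\in{\cal S}(T)}B_u$. Any dominating set $D$ must contain $w$ itself or a neighbor of $w$; this produces a vertex of $D$ disjoint from the $|{\cal S}(T)|$ contributions forced by the blocks $B_u$, so $|D|>|{\cal S}(T)|$ and hence $\gamma(T)>|{\cal S}(T)|$, contradicting (ii). The main technical point, common to both implications of (ii) $\Leftrightarrow$ (iii), is the block/disjointness argument that yields $\gamma(T)\geq|{\cal S}(T)|$ and its sharpening showing that failing (iii) strictly forces an additional vertex into every dominating set; the remaining verifications are purely algebraic.
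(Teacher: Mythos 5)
Your proof is correct and follows essentially the same route as the paper: the equivalence (i) $\Leftrightarrow$ (ii) via the identity $\ell(T)-\ell'(T)=|{\cal S}(T)|-|{\cal S}'(T)|$ plugged into Theorem~\ref{th:HO_trees}(i), and (ii) $\Leftrightarrow$ (iii) via the observation that $\gamma(T)\geq|{\cal S}(T)|$ always holds and equality is equivalent to ${\cal S}(T)$ being a dominating set, which is a restatement of (iii). The only (minor) difference is that you obtain the lower bound $\gamma(T)\geq|{\cal S}(T)|$ by a disjoint-blocks packing argument on the sets $\{u\}\cup L(u)$, whereas the paper normalizes a minimum dominating set by swapping each leaf for its support vertex; both are valid.
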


\begin{proof}

\noindent ({\it i $\Longleftrightarrow$ ii}) By statement (i) of Theorem~\ref{th:HO_trees}, $\gamma_M(T)=\ell(T)$ is equivalent to $\gamma(T)=\ell(T)-\ell'(T)+|{\cal S}'(T)|$
but $\ell(T)-\ell'(T)$ is the number of non-strong support vertices, i.e. $|{\cal S}(T)|-|{\cal S}'(T)|$,
which gives $\gamma(T)=|{\cal S}(T)|$.

\noindent ({\it ii $\Longleftrightarrow$ iii})   Let $S\subseteq V(T)$ be a minimum dominating set of $T$.
Observe that, when replacing any leaf of $S$ by its corresponding support vertex, the resulting set is still a dominating set  without leaves
and containing each support vertex of $T$ (since all leaves must be dominated by   vertices of $S$).
 Thus, let us assume ${\cal S}(T)\subseteq S$, which implies that $\gamma(T)\geq |{\cal S}(T)|$ since $S$ has minimum cardinality.
  Therefore,
 $\gamma(T)=|{\cal S}(T)|$ if and only if ${\cal S}(T)$ is a minimum dominating set of $T$, i.e., every $u\in V(T)$
 is either in ${\cal S}(T)$ or has a neighbor in ${\cal S}(T)$. Equivalently, $u$ is either
 a support vertex ($d(u,x)=1$ for some leaf $x\in N(u)$) or a leaf ($d(u,u)=0)$
  or adjacent to a support vertex, say $v$ ($d(u,x)=2$ for some leaf $x\in N(v)$).
\end{proof}

\section{MLD-sets versus LD-sets}\label{sec:MLD_LD}

In view  of the relationship $\gamma_M(G)\leq \gamma_L(G)$  given in expression (\ref{exp:3chain}),
it is natural to look for 
 upper bounds on $\gamma_L(G)$ in terms of $\gamma_M(G)$ as
Henning and Oellermann~\cite{HenningOellermann} did in Theorem~\ref{th:HO_MLD_LD}.
   In this section, we extend this result by first providing
     a wide class of graphs $G$ (that includes trees) whose
      MLD-sets can be transformed into LD-sets; this leads us to the upper bound   $\gamma_L(G)\leq \gamma_M^2(G)$.
     We also  prove, for arbitrary graphs $G$, that broadly speaking  any upper bound
   on $\gamma_L(G)$   as a function  of $\gamma_M(G)$ has   growth at least exponential.

Let $G$ be a graph not having the cycles $C_4$ or $C_6$ as a subgraph, and let $S$ be any subset of $V(G)$.
 We   assign to every pair $u,v\in S$ a set of vertices $\pi(u,v)$ given by $\{u',v'\}$ whenever
 there exists a $u$-$v$ path  $(u,u',v',v)$ (that is unique because of the $C_4$- and $C_6$-free condition),
 and $\emptyset$ otherwise.
  Let $\pi(S)=\bigcup_{u,v\in S}\pi(u,v)$.

\begin{proposition} Let $G$ be a graph not containing $C_4$ or $C_6$ as a subgraph.
For every MLD-set $S\subseteq V(G)$, the set  $S\cup \pi (S)$ is an LD-set of $G$.
Consequently, $\gamma_L(G)\leq \gamma_M^2(G)$.
\end{proposition}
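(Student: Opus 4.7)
The plan is a proof by contradiction on the locating condition, walking back along a shortest path from a carefully chosen $u\in S$ and exploiting the $C_4$/$C_6$-free hypothesis to identify explicit internal vertices of 3-paths that lie in $\pi(S)$. Domination of $S'$ is automatic since $S\subseteq S'$ and $S$ dominates, so only location needs to be checked. I will assume $x,y\in V(G)\setminus S'$ with $N(x)\cap S' = N(y)\cap S'$ and derive a contradiction. Because $S\subseteq S'$, the hypothesis forces $A := N(x)\cap S = N(y)\cap S$, which is nonempty by domination of $x$ and $y$. Fix any $a\in A$. Since $S$ resolves $\{x,y\}$ there is $u\in S$ with $d(u,x)\neq d(u,y)$; such a $u$ must lie outside $A$ (else both distances equal $1$), hence $d(u,x),d(u,y)\geq 2$, and we may set $k := d(u,x) < d(u,y)$ and fix a shortest path $u = v_0, v_1, \ldots, v_k = x$. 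The first easy observation is $v_{k-1}\notin S$, for otherwise $v_{k-1}\in A$ would give $d(u,y)\leq k$.

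I would then split on $k$. If $k=2$, the path $u-v_1-x-a$ has four pairwise distinct vertices (using $v_1\notin S$, $x\notin S$, $u\notin A$) and endpoints in $S$, so by $C_4$/$C_6$-free uniqueness $\pi(u,a)=\{v_1,x\}$, and $x\in \pi(S)\subseteq S'$ contradicts $x\notin S'$. If $k\geq 3$, the same trick applied to $\pi(v_{k-2},a)$ rules out $v_{k-2}\in S$, so $v_{k-2}\notin S$. Now pick any $b\in N(v_{k-1})\cap S$ (nonempty by domination of $v_{k-1}$): if $b\neq a$, then $b-v_{k-1}-x-a$ has four pairwise distinct vertices and endpoints in $S$, so $\pi(b,a)=\{v_{k-1},x\}$ and again $x\in S'$, a contradiction.

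The remaining subcase is $N(v_{k-1})\cap S = \{a\}$, which is where $C_4$-freeness does the essential work. Here $a-v_{k-1}$ is an edge. Picking $c\in N(v_{k-2})\cap S$ (nonempty by domination, since $v_{k-2}\notin S$), the possibility $c=a$ would close the cycle $a-v_{k-2}-v_{k-1}-x-a$ on four pairwise distinct vertices, a forbidden $C_4$. Hence $c\neq a$, and $c-v_{k-2}-v_{k-1}-a$ is a valid 3-path between distinct vertices of $S$ with four pairwise distinct vertices, giving $v_{k-1}\in \pi(c,a)\subseteq \pi(S)\subseteq S'$. Since $v_{k-1}\in N(x)$ but $v_{k-1}\notin N(y)$ (otherwise $d(u,y)\leq (k-1)+1=k$), this produces an element of $S'$ lying in $N(x)$ and not in $N(y)$, contradicting the standing assumption.

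Once the locating condition is established, the quantitative bound is immediate: $|\pi(S)|\leq 2\binom{|S|}{2}$, so $|S'|\leq |S|^2$, and applying this to a minimum MLD-set yields $\gamma_L(G)\leq \gamma_M^2(G)$. The main obstacle I expect is organising the case analysis so that in every situation a concrete 3-path with endpoints in $S$ can be produced, together with the careful distinctness checks on its four vertices; the genuinely essential use of $C_4$-freeness (beyond the uniqueness of 3-paths, which needs both $C_4$ and $C_6$) occurs only at the very end, in ruling out $c=a$.
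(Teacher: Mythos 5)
Your proof is correct, but it reaches the contradiction by a genuinely different route than the paper's. The paper's argument never leaves the second neighborhood of $x$: it first uses $C_4$-freeness to collapse $N(x)\cap S=N(y)\cap S$ to a single vertex $u$, then invokes the resolving property \emph{twice} --- once on $\{x,y\}$ to produce a non-$S$ neighbor $z$ of $x$, and once on $\{x,z\}$ to produce a further vertex $z'$ --- and in every branch the length-3 path it builds between two vertices of $S$ passes through $x$ itself, so the contradiction is always $x\in\pi(S)$. You instead invoke resolving only once, obtaining $u\in S$ with $k=d(u,x)<d(u,y)$, and walk back along a shortest $u$-$x$ path; your case analysis on $k$ either puts $x$ into $\pi(S)$, or, in the final subcase, puts $v_{k-1}\in N(x)\setminus N(y)$ into $\pi(S)$, which violates the assumed equality of neighborhoods in $S\cup\pi(S)$ directly rather than the membership condition $x\notin S\cup\pi(S)$. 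Your version dispenses with the reduction to a singleton common $S$-neighborhood and uses the resolving hypothesis more economically, at the price of the shortest-path bookkeeping and the distinctness checks on the four vertices of each 3-path (all of which do go through; e.g.\ $v_{k-2}\neq a$ since $d(u,a)\geq k-1>k-2$, and the internal vertices never lie in $S$ while the endpoints do). Each proof uses $C_4$-freeness in an essential way beyond the uniqueness of 3-paths at exactly one decisive point (the paper, to separate the dominators of $x$ and $y$ and to force $u''\neq u$; you, to rule out $c=a$), and both lean identically on the paper's uniqueness convention in the definition of $\pi$. The concluding count $|S\cup\pi(S)|\leq|S|+2\binom{|S|}{2}=|S|^2$ is the same in both.
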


\begin{proof}
  Let $\overline S=S\cup \pi(S)$.
 We  need to check that $N(x)\cap  \overline S\neq N(y)\cap \overline S$
 for every $x,y\in V(G)\setminus \overline S$ (since $S\subseteq \overline S$ is a dominating set of $G$).
 On the contrary, let us assume the existence of two vertices $x,y\in V(G)\setminus \overline S$ so that  $N(x)\cap  \overline S= N(y)\cap \overline S$.
Since $S$ is a dominating set, then there are $u,v\in S$ such that $x\in N(u)$ and $y\in N(v)$.
If $u\neq v$ then either $x\not\in N(v)$ or $y\not\in N(u)$ (otherwise $(u,x,v,y)$ would be a cycle on 4 vertices of $G$, which is impossible),
 and so $N(x)\cap  \overline S \neq N(y)\cap \overline S$.
 Hence, $u=v$. Moreover,
 since the existence of a vertex $v\in N(x)\cap N(y)$ different from $u$ produces the cycle $(u,x,v,y)$, which  cannot exist,
 then  we have that $N(x)\cap  N(y)=\{u\}$, and so $N(x)\cap S=N(y)\cap S=\{u\}$.

Let $z\in V(G)\setminus S$ be a neighbor of either $x$ or $y$
(that exists because otherwise
$N(x)=N(y)=\{u\}$ and so the pair $\{x,y\}$ is 
 not resolved by $S$, which is impossible since $S$ is a resolving set).
 Assuming without loss of generality  $z\in N(x)$, we have that
$z\not\in N(y)$  since we have seen that $N(x)\cap N(y)=\{u\}$. On the other hand,
there is a vertex $u'\in S$ dominating $z$. If $u'\neq u$ then $\pi(u,u')=\{x,z\}\subseteq \pi(S)$; a contradiction since $x\not\in \pi(S)$.
Therefore, $u=u'$ and $N(z)\cap S=\{u\}$. 

Let  $z'\in V(G)\setminus S$ be
such that either $z'\in N(x)\setminus N(z)$ or $z'\in N(z)\setminus N(x)$
  (which exists since otherwise
$N[x]=N[z]=\{x,z,u\}$ and so the pair $\{x,z\}$ 
 is  not resolved by $S$). 
  If $z'\in N(x)\setminus N(z)$ (analogous for the case $z'\in N(z)\setminus N(x)$) then there is $u''\in S$ dominating $z'$,
different from $u$ since otherwise we could obtain the cycle $(u,z',x,z)$. 
Hence,
$\pi(u,u'')=\{x,z'\}\subseteq \pi(S)$, a contradiction with $x\not\in \overline S=S\cup \pi(S)$.

We have thus proved that  $\overline S$ is an LD-set of $G$. To complete the proof,
observe that, for any pair $u,v\in S$, the set $\pi(u,v)$ may intersect either
$S$ or $\pi(u',v')$ for another pair $u',v'\in S$. Consequently,
$|\pi (S)|\leq 2\binom{|S|}{2}$ and so $|\overline S|\leq |S|+|\pi(S)|\leq |S|^2$.
 Therefore, choosing $S$ with minimum cardinality yields $\gamma_L(G)\leq |\overline S|\leq |S|^2= \gamma_M^2(G)$, as required.
\end{proof}

 Henning and Oellermann \cite{HenningOellermann}
showed that there is no linear upper bound on  $\gamma_L(G)$ in terms of $\gamma_M(G)$ by
    building up an appropriate family of graphs
    $G$ with $\gamma_L(G)>c\gamma_M(G)$ for any constant  $c$.
      However, their construction satisfied $\gamma_L (G)<\gamma^2_M (G)$, and so to extend their result to
       other polynomial orders   a new family of graphs is required.
                 We next provide such a family of graphs in the proof of the 
                 following theorem that, in particular, shows that there is no polynomial upper bound on
$\gamma_L(G)$ depending on $\gamma_M(G)$.

\begin{theorem}\label{th:Phi} Let $\Phi:\mathbb{R}\longrightarrow \mathbb{R}$ be a function such that $\gamma_L(G)\leq \Phi(\gamma_M(G))$ for every graph $G$. Then,
$\Phi(x)\geq 2^{x-2}-1$ for all $x\geq 0$.
\end{theorem}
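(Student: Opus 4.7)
The plan is to reduce Theorem~\ref{th:Phi} to the construction, for each integer $k\ge 2$, of a single graph $G_k$ witnessing an exponential gap between $\gamma_M$ and $\gamma_L$. I aim for $\gamma_M(G_k)\le k+1$ and $\gamma_L(G_k)\ge 2^k-1$. Evaluating the hypothesis at $G=G_k$ then gives $\Phi(k+1)\ge 2^k-1$, so $\Phi(x)\ge 2^{x-1}-1\ge 2^{x-2}-1$ at every integer $x=k+1\ge 2$; the asserted inequality is trivial for $x<2$, where $2^{x-2}-1\le 0$.

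The construction is the following. Start with a clique $U=\{u_1,\ldots,u_k\}$. For each nonempty $T\subseteq[k]$ introduce three new vertices $x_T$, $y_T$, $z_T$: both $x_T$ and $y_T$ are made adjacent to exactly $\{u_i:i\in T\}$, while $z_T$ is a pendant whose unique neighbour is $x_T$. Finally, add one extra vertex $a$ joined to every $z_T$. The resulting graph $G_k$ is connected and has $k+1+3(2^k-1)$ vertices.

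For the lower bound on $\gamma_L$, the key observation is the strict inclusion $N(y_T)=\{u_i:i\in T\}\subsetneq N(x_T)=\{u_i:i\in T\}\cup\{z_T\}$. Hence, for any LD-set $S$, if both $x_T$ and $y_T$ lie outside $S$, then distinguishing the pair $\{x_T,y_T\}$ forces $z_T\in S$. Consequently, $S$ must meet each of the $2^k-1$ pairwise disjoint triples $\{x_T,y_T,z_T\}$, giving $\gamma_L(G_k)\ge 2^k-1$. For the upper bound on $\gamma_M$, I claim that $U\cup\{a\}$ is an MLD-set. Dominance is immediate. For the resolving property, a short case analysis shows that the distance vector from $U$ encodes $T$ via the pattern $1/2$ for $x_T,y_T$ and $2/3$ for $z_T$, so all pairs indexed by distinct subsets are separated by $U$; the only surviving pair $\{x_T,y_T\}$ is then resolved by $a$, using the asymmetry $d(a,x_T)=2$ versus $d(a,y_T)=4$.

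The one delicate point is the verification that $d(a,y_T)=4$, which requires ruling out any path of length $\le 3$ from $a$ to $y_T$. This rests on the structural facts that $a$'s only neighbours are the $z_{T'}$'s, that each $z_{T'}$ has $x_{T'}$ as its unique other neighbour, and that no edge $x_Ty_T$ was introduced, so that any walk of length $3$ starting at $a$ terminates at some vertex of $U$ rather than at $y_T$. Granting this structural check (together with the trivial $d(a,x_T)=2$ via the route $a\to z_T\to x_T$), the two bounds combine mechanically to prove the theorem.
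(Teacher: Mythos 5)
Your proof is correct and follows essentially the same strategy as the paper's: an explicit family of graphs in which a logarithmic-size set resolves and dominates exponentially many subset-indexed vertices, while any LD-set must meet each of the exponentially many disjoint gadgets (the paper's $G_s$, built from a center $p$, paths $a_ib_i$, and bit-vertices $c_j$, plays exactly the role of your clique-plus-near-twins construction). One small imprecision: since you only establish $\gamma_M(G_k)\le k+1$, the hypothesis yields $\Phi(\gamma_M(G_k))\ge 2^k-1$ rather than $\Phi(k+1)\ge 2^k-1$; as in the paper's own wording, this should be stated as $\Phi(m)\ge 2^{m-1}-1$ at $m=\gamma_M(G_k)$, which still gives the claimed conclusion.
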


\begin{proof}

To prove the result,
 we  construct a family of graphs $G_s$ such that $\gamma_L(G_s) \geq 2^{\gamma_M(G_s)-2}-1$ as follows.
  For a positive integer $s$,  let $A=\{a_i: i=0,...,2^{s+1}-1\}$, $B=\{b_i: i=0,...,2^{s+1}-1\}$
and $C=\{c_i: i=0,...,s\}$.
The graph $G_s$ has vertex set $V(G_s)=\{p\}\cup A\cup B\cup C$ and
edge set given by the pairs 
 $pa_i$ and $a_ib_i$ for every $i\in\{0,\ldots,2^{s+1}-1\}$, and
$b_ic_j$ whenever the binary representation of $i$ has a 1 in its $j$-th position
 (Figure~\ref{fig:G2} illustrates the case for $s=2$).

\begin{figure}[htb]
\begin{center}
\includegraphics[height=4.85cm]{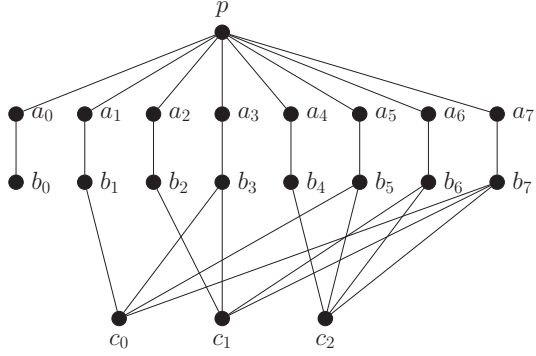}
\caption{The graph $G_2$.}\label{fig:G2}
\end{center}
\end{figure}

It is easy to check that the set $\{C\}\cup\{p\}\cup\{b_0\}$ is  an MLD-set of $G_s$, which implies that
\begin{equation}\label{eq:G_t1}
\gamma_M(G_s)\leq s+3.
\end{equation}
On the other hand,   each LD-set $S$ of $G_s$ must contain,
for each $i\in\{0,\ldots,2^{s+1}-1\}$ but at most one, either vertex $a_i$ or vertex $b_i$.
Indeed, if  there exist $i$ and $j$ such that $a_i,b_i,a_j,b_j\not\in S$ then
$$N(a_i)\cap S = N(a_j) \cap S = \left\{ \begin{array}{lcc}
                      \{p\}  &  if & p\in S \\
                     \\ \emptyset&     & otherwise
             \end{array}
   \right.$$
But this is impossible since $S$ is an LD-set of $G$. Thus,

\begin{equation}\label{eq:G_t2}
\gamma_L(G_s)\geq 2^{s+1}-1.
\end{equation}

Therefore, combining inequalities (\ref{eq:G_t1}) and (\ref{eq:G_t2}) yields $\gamma_L(G_s)\geq 2^{\gamma_M(G_s)-2}-1$,
which gives the result.
\end{proof}

\section{Doubly resolving sets from MLD-sets}\label{sec:doubly_MLD}

In this section, we show how useful MLD-sets can be for constructing doubly resolving sets of graphs.
Indeed, we design a method that, given an MLD-set   of a graph $G$ with $g(G)\geq 5$, 
 produces a doubly resolving set of the same size
 (recall that ${\rm g}(G)$ is the {\it girth} of $G$, i.e., the length of a shortest cycle of $G$);
  when $G$ is any graph, our method also implies the use of dominating sets. In both cases
 we obtain  bounds that involve $\psi(G)$ and $\gamma_M(G)$ (Proposition~\ref{prop:g5} and Theorem~\ref{th:pgg}), giving rise to
  chains similar  to   expression (\ref{exp:3chain}) but including the invariant $\psi(G)$ (Corollaries~\ref{coro:4chain}
 and \ref{coro:2MLD}).
We start with the following two lemmas that are the key     to relate MLD-sets to doubly resolving sets.

\begin{lemma} \label{etadoubly}
Let $G$ be a graph and let $S$ be an MLD-set of $G$. Then, every pair $x,y\in V(G)\setminus S$ is doubly resolved by
$S$.
\end{lemma}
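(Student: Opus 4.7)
My plan is a short contradiction argument based on the function $f:S\to\mathbb{Z}$ defined by $f(s)=d(s,x)-d(s,y)$. Saying that no two elements of $S$ doubly resolve $\{x,y\}$ is precisely saying that $f$ is constant on $S$, say $f\equiv c$. The strategy is to use the domination hypothesis twice to pin down $c=0$, and then use the resolving hypothesis to derive a contradiction.

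First I would invoke domination: since $S$ dominates and $x\notin S$, there is some $u\in S$ with $d(u,x)=1$. If $f$ is constantly $c$, then $d(u,y)=1-c$; since $y\neq u$ (because $u\in S$, $y\notin S$), we have $d(u,y)\geq 1$, forcing $c\leq 0$. Symmetrically, there is some $v\in S$ with $d(v,y)=1$, and then $d(v,x)=c+1\geq 1$ forces $c\geq 0$. Hence $c=0$, meaning $d(s,x)=d(s,y)$ for every $s\in S$.

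This directly contradicts the fact that $S$ is a resolving set of $G$, which requires some vertex of $S$ to distinguish the pair $\{x,y\}$ by its distances. Therefore some pair $\{s_1,s_2\}\subseteq S$ must satisfy $f(s_1)\neq f(s_2)$, i.e.\ doubly resolves $\{x,y\}$.

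I do not anticipate a real obstacle: the only subtlety is making sure that the two applications of domination yield the sign bounds $c\leq 0$ and $c\geq 0$, which rely on the simple fact that vertices of $S$ cannot coincide with $x$ or $y$. Everything else is immediate from the definitions, so the proof should fit in a few lines.
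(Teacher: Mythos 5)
Your proof is correct and rests on the same two ingredients as the paper's: domination forces the difference $d(s,x)-d(s,y)$ to be nonpositive at a dominator of $x$ and nonnegative at a dominator of $y$, and the resolving property rules out the difference being identically zero. The only difference is presentational — your ``$f$ is constant'' contradiction framing merges the paper's two cases (dominators equal or distinct) into a single argument.
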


\begin{proof}
Given any two vertices $x,y\in V(G)\setminus S$, we shall prove that there exist $u,v\in S$ such that $d(u,x)-d(u,y)\neq d(v,x)-d(v,y)$.
 Indeed, let $u',v'\in S$ such that $x\in N(u')$ and $y\in N(v')$, which exist since $S$ is a dominating set of $G$.
 We distinguish two cases:
\begin{enumerate}
\item $u'=v'$. We have $d(u',x)-d(u',y)=1-1=0$ and, since $S$ is a resolving set, there is a vertex $w\in S\setminus\{u'\}$
 such that $d(w,x)\neq d(w,y)$, which implies that $d(w,x) - d(w,y) \neq 0$. Therefore,
  we  set $\{u,v\}=\{u',w\}$.
\item $u'\neq v'$. We can assume that $x\not\in N(v')$ and $y\not\in N(u')$ (otherwise we proceed as in the previous case) and so
 $d(u',y),d(v',x)>1$. Hence, $d(u',x)-d(u',y)<0$ and $d(v',x)-d(v',y)>0$, so 
 we can take $\{u,v\}=\{u',v'\}$.
\end{enumerate}
\end{proof}

\begin{lemma} \label{lem:unique}
Let $S$ be an MLD-set of a graph $G$, and let $u\in S$ and $x\in V(G)\setminus S$  such that $\{u,x\}$ is
not doubly resolved by  $S$. Then, $N(x)\cap S=\{u\}$. Furthermore,
 $x$ is the only vertex of $V(G)\setminus S$ so that $\{u,x\}$ is not doubly resolved by $S$.
\end{lemma}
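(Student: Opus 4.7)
The plan is to extract enough arithmetic content from the hypothesis that $\{u,x\}$ is not doubly resolved by $S$. By the definition of double resolution, this hypothesis is equivalent to saying that the quantity $d(v,u)-d(v,x)$ takes the same value for every $v\in S$. Evaluating at the specific choice $v=u$ pins that common value down to $-d(u,x)$. Thus the hypothesis rephrases as the identity
\[
d(w,x)=d(w,u)+d(u,x) \qquad \text{for every } w\in S,
\]
which by the triangle inequality says that $u$ lies on a shortest $w$-to-$x$ path for every $w\in S$.

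Next I would invoke the dominating property of $S$ to obtain some $v\in S$ with $d(v,x)=1$. Substituting into the displayed identity gives $1=d(v,u)+d(u,x)$. Since $u\in S$ and $x\notin S$ force $d(u,x)\ge 1$, and $d(v,u)\ge 0$, the only possibility is $d(u,x)=1$ and $d(v,u)=0$; in other words $v=u$. Applying the same reasoning to any element of $N(x)\cap S$ shows it must coincide with $u$, so $N(x)\cap S=\{u\}$, which establishes the first assertion.

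For the uniqueness clause I would argue by contradiction: suppose there is some $y\in V(G)\setminus S$, distinct from $x$, such that $\{u,y\}$ is also not doubly resolved by $S$. Applying the first part of the argument to $y$ yields $N(y)\cap S=\{u\}$ and $d(u,y)=1$. Combining the identities obtained for $x$ and for $y$ gives
\[
d(w,x)=d(w,u)+1=d(w,y) \qquad \text{for every } w\in S,
\]
which contradicts the fact that $S$ is a resolving set of $G$.

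I do not expect a serious obstacle in this lemma. The one conceptual move to highlight is the first step: using the liberty to choose $v$ inside $S$, and in particular $v=u$, to collapse the non-doubly-resolved condition into the clean geodesic identity above. Once that identity is available, both the structural statement about $N(x)\cap S$ and the uniqueness of $x$ follow from short arguments playing the dominating and resolving properties of $S$ against each other.
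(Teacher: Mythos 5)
Your proof is correct and follows essentially the same route as the paper's: both collapse the non-doubly-resolved condition to the identity $d(w,x)=d(w,u)+d(u,x)$ for all $w\in S$, use domination to force $d(u,x)=1$ and $N(x)\cap S=\{u\}$, and derive uniqueness from the resolving property via $d(w,x)=d(w,u)+1=d(w,y)$. No gaps.
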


\begin{proof}

 First, we prove that $N(x)\cap S=\{u\}$.
 Observe that, for every $v\in S$, 
\begin{equation}\label{eq:v}
d(u,u)-d(u,x)=d(v,u)-d(v,x)
\end{equation}
 since pair
$\{u,x\}$ is not doubly resolved by $S$  (in particular by $u$ and $v$). As $S$ is a dominating set,   there is some vertex $v^*\in S$
with $x\in N(v^*)$, and so setting $v=v^*$ in  (\ref{eq:v}) yields $-d(u,x)=d(v^*,u)-1$.
Necessarily, $d(v^*,u)=0$ and $d(u,x)=1$, which implies that
$v^*=u$ and (\ref{eq:v}) becomes
\begin{equation}\label{eq:vv}
d(v,x)=d(v,u)+1
\end{equation}
 for each $v\in S$. Hence, $d(v,x)>1$ whenever $v\in S\setminus \{u\}$, and so
$N(x)\cap S=\{u\}$.

Now, we show that there is no other vertex $x'\in V(G)\setminus S$ different from $x$ so that $\{u,x'\}$ is not
doubly resolved by $S$. Let us assume on the contrary   the existence of such a vertex $x'$.
 Reasoning as above with vertex $x$, we easily get
  $d(v,x')=d(v,u)+1$ for any $v\in S$, which combined with (\ref{eq:vv})
gives $d(v,x)=d(v,x')$; a contradiction since $S$ is a resolving set of $G$.
\end{proof}

\begin{observation}\label{obs:uvuv}
For any subset of vertices $S$ of a graph $G$, it is obvious that any pair $\{u,v\}\subseteq  S$ is doubly resolved by $u$ and $v$.
\end{observation}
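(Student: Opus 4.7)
The plan is to verify the defining inequality of doubly resolving directly from the definition, since the statement is essentially a one-line calculation (which is why the authors label it an observation rather than a lemma).

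Given a pair $\{u,v\}\subseteq S$, I read $\{u,v\}$ as a two-element set, so $u\neq v$ and in particular $d(u,v)\geq 1$. I would then compute the two relevant differences of distances. Using that $d(w,w)=0$ for every vertex $w$, we have
\[
d(u,u)-d(u,v) \;=\; -\,d(u,v),
\]
and, invoking the symmetry of graph distance,
\[
d(v,u)-d(v,v) \;=\; d(v,u) \;=\; d(u,v).
\]
Since $d(u,v)\geq 1 > 0$, the values $-d(u,v)$ and $d(u,v)$ are distinct, so by definition $u$ and $v$ themselves doubly resolve the pair $\{u,v\}$.

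There is no real obstacle here; the only thing to watch is the implicit convention that the notation $\{u,v\}$ denotes a genuine pair (i.e.\ $u\neq v$), which is exactly what makes $d(u,v)\neq 0$ and hence drives the strict inequality. This observation will presumably be combined with Lemma~\ref{etadoubly} and Lemma~\ref{lem:unique} in the sequel to conclude that, with only a few additional vertices added to an MLD-set, one obtains a doubly resolving set.
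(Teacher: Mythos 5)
Your calculation is correct and is exactly the one-line verification the paper has in mind when it declares the observation ``obvious'': $d(u,u)-d(u,v)=-d(u,v)<0<d(u,v)=d(v,u)-d(v,v)$ since $u\neq v$. Nothing further is needed.
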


Regarding  Lemmas~\ref{etadoubly} and \ref{lem:unique} and Observation~\ref{obs:uvuv}, it is natural to ask
whether MLD-sets  $S$ doubly resolve   pairs $\{u,x\}$ with $u\in S$ and $x\in V(G)\setminus S$, thus
implying that MLD-sets would be doubly resolving sets (and so $\psi(G)\leq \gamma(G)$).
Unfortunately,   this is not true in general as graph $H_t$ depicted in Figure~\ref{fig:flower} shows because the set $\{a_1,\ldots,a_t\}$ is an MLD-set of $H_t$ but it does not doubly resolve any pair $\{a_i,c_i\}$; also, $\psi(H_t)=2t=2\gamma_M(G)$.
Furthermore, even adding the extra condition ${\rm g}(G)\geq 5$,
 MLD-sets are not necessarily  doubly resolving sets (see the graph $H'_t$ of Figure~\ref{fig:comb}). 
 However, for this class of graphs, we next describe how to   modify the elements of any MLD-set   to obtain a doubly resolving set,  thereby producing the bound $\psi(G)\leq \gamma_M(G)$.

\begin{figure}[htb]
\begin{center}
\includegraphics[height=2.9cm]{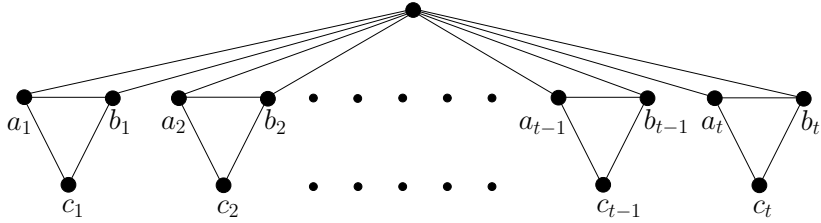}
\caption{The graph $H_t$, with $t\geq 2$, for which $\{a_1,\ldots,a_t\}$ and $\{b_1,\ldots,b_t,c_1,\ldots,c_t\}$ are, respectively, a minimum MLD-set and a minimum doubly resolving set. }\label{fig:flower}
\end{center}
\end{figure}

\begin{figure}[htb]
\begin{center}
\includegraphics[height=1.95cm]{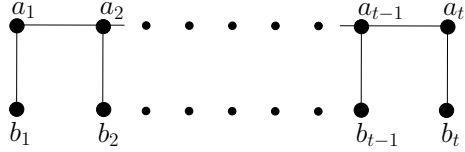}
\caption{The graph $H'_t$, with $t\geq 1$, where $\{a_1,\ldots,a_t\}$ is a minimum  MLD-set but it does
not doubly resolve any pair
$\{a_i,b_i\}$.}\label{fig:comb}
\end{center}
\end{figure}

 Let $S$ be an MLD-set of a graph $G$ with ${\rm g}(G)\geq 5$,
and observe that every $u\in S$ has at most one neighbor   of degree 1 in $V(G)\setminus S$ (otherwise $S$ would not be a resolving set).
Thus, let $\overline u$ be such a neighbor if it exists, and $u$ otherwise. 
Note that, by construction, $\overline u\neq\overline v$ for any two different vertices $u,v\in S$.
The following result proves that changing each $u\in S$ to $\overline u$ yields a doubly resolving set of $G$ whenever $g(G)\geq 5$.

\begin{proposition} \label{prop:g5} Let $G$ be a graph 
with ${\rm g}(G)\geq 5$ and $G\not\cong P_2$. For any MLD-set  $S\subseteq V(G)$,
 the set $\overline S=\{\overline u:u\in S\}$ is a doubly resolving set of $G$. Consequently, $$\psi(G)\leq \gamma_M(G)$$ and this bound is tight.
\end{proposition}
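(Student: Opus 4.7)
The plan is to establish $\psi(G) \leq \gamma_M(G)$ in two steps. First, since the map $u \mapsto \overline u$ is injective (as noted in the paper), $|\overline S| = |S|$, so once $\overline S$ is shown to be doubly resolving we obtain $\psi(G) \leq |\overline S| = \gamma_M(G)$ by choosing $S$ of minimum cardinality. Tightness will be witnessed by small explicit examples such as $P_5$, where $\gamma_M(P_5) = \psi(P_5) = 2$.

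The main work is to prove $\overline S$ is doubly resolving. The basic tool is the identity $d(\overline u, z) = d(u, z) + \varepsilon_u$, valid for every $u \in S$ and every $z \neq \overline u$, where $\varepsilon_u = 1$ if $\overline u$ is the unique degree-$1$ neighbor of $u$ in $V(G)\setminus S$ and $\varepsilon_u = 0$ otherwise; this holds because in the former case every path to $\overline u$ must pass through its only neighbor $u$. Writing $\delta_u(x,y) := d(u,x) - d(u,y)$, this yields the transfer principle $\delta_{\overline u}(x,y) = \delta_u(x,y)$ whenever $x, y \neq \overline u$, so any pair $u, v \in S$ doubly resolving $\{x, y\}$ is inherited by $\overline u, \overline v \in \overline S$ provided $\{x, y\} \cap \{\overline u, \overline v\} = \emptyset$.

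I then proceed by case analysis on the partition $V(G) = \overline S \sqcup B \sqcup W$, where $B = \{u \in S : \overline u \neq u\}$ and $W = V(G) \setminus (S \cup \overline S)$. The case $\{x,y\} \subseteq \overline S$ is immediate by Observation~\ref{obs:uvuv}, while $\{x,y\} \subseteq W \subseteq V(G) \setminus S$ follows from Lemma~\ref{etadoubly} combined with the transfer principle. When at least one endpoint lies in $B$, say $x \in B$, the pendant $\overline x$ is the key: it is a leaf adjacent only to $x$, so $\delta_{\overline x}(x,y) = -d(x,y)$ for $y \neq \overline x$, and pairing $\overline x$ with $\overline y$ (if $y \in B$) or with $y$ itself (if $y \in \overline S$, $y \neq \overline x$) gives the positive value $d(x,y)$; the degenerate case $y = \overline x$ is handled by using $y$ with any other vertex of $\overline S$ (which exists since $|\overline S| \geq 2$ for $G \not\cong P_2$), noting that any $w \neq \overline x$ satisfies $\delta_w(x,\overline x) = -1 \neq 1 = \delta_y$. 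The remaining subcase with $x \in \overline S$ a leaf attached to some $u_0 \in B$ and $y \in W$ falls to the triangle inequality $d(w,y) \leq d(w, u_0) + d(u_0, y) = d(w, x) + d(x, y) - 2$, which yields $\delta_w(x,y) \geq 2 - d(x,y) > -d(x,y) = \delta_x(x,y)$ for every $w \in \overline S \setminus \{x\}$.

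The hard part, and the only place the girth hypothesis is essential, is when $x \in S$ and $y \in W$ and $S$ itself fails to doubly resolve $\{x,y\}$. Lemma~\ref{lem:unique} then forces $N(y) \cap S = \{x\}$ and $\delta_v(x,y) = -1$ for every $v \in S$, and this collapse transfers verbatim to $\overline S$, so an additional ingredient is indispensable. I will show this failure cannot occur under ${\rm g}(G) \geq 5$: since $y \in W$ excludes $y = \overline x$, the vertex $y$ is not a leaf adjacent only to $x$, so it has a neighbor $y' \in V(G) \setminus S$ dominated by some $u' \in S$. If $u' = x$, then $x$-$y$-$y'$-$x$ is a triangle; otherwise $N(y) \cap S = \{x\}$ gives $d(u',y) \geq 2$ while $y' \in N(u') \cap N(y)$ gives $d(u',y) = 2$, and combining with $\delta_{u'}(x,y) = -1$ forces $d(u',x) = 1$, producing the $4$-cycle $x$-$y$-$y'$-$u'$-$x$. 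Both possibilities contradict ${\rm g}(G) \geq 5$, so $S$ doubly resolves $\{x,y\}$ and the transfer principle closes the proof.
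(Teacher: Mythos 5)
Your proof is correct and follows essentially the same route as the paper's: the transfer identity $d(\overline u,z)=d(u,z)+\varepsilon_u$, the triangle-inequality estimate for pairs involving a pendant $\overline u$, and the girth argument (neighbor $y'$ of $y$ outside $S$, its dominator $u'$, then a forced triangle or $4$-cycle) that shows an $S$-unresolved pair $\{x,y\}$ with $x\in S$ forces $y=\overline x$. The only differences are cosmetic: you organize the cases by the partition $\overline S\sqcup B\sqcup W$ rather than by whether $S$ already doubly resolves the pair, you run the girth step in contrapositive form, and you certify tightness with $P_5$ instead of the graph $H'_t$.
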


\begin{proof}
We begin by noticing that, given  a vertex $u\in S$, we have that
   \begin{equation}\label{eq:d+1} d(\overline u,x)-d(\overline u,y)=d(u,x)-d(u,y)
   \end{equation}
  for any two vertices  $x,y\neq \overline u$
  (because if $\overline u\neq u$ then $N(\overline u)=\{u\}$, and so  $d(\overline u,x)-d(\overline u,y)=(d(u,x)+1)-(d(u,y)+1)$).
To prove that $\overline S$ is a doubly resolving set, we first show that $\overline S$ doubly resolves at least the same pairs as $S$.
    Indeed, let $\{x,y\}\subseteq V(G)$ be a pair that
   is   doubly resolved by $S$, i.e., there exist $u,v\in S$ such that $d(u,x)-d(u,y)\neq d(v,x)-d(v,y)$. We shall see that $\overline u$ and $\overline v$ (which must be different) doubly resolve $\{x,y\}$.  Clearly, if $x,y\not \in\{\overline u,\overline v\}$ then $\overline u$ and $\overline v$ doubly resolve $\{x,y\}$, by (\ref{eq:d+1});
     otherwise, either $\{x,y\}=\{\overline u, \overline v\}$ (and so $\overline u$ and $\overline v$ doubly resolve $\{x,y\}$,
      by Observation~\ref{obs:uvuv})
      or $|\{\overline u,\overline v\}\cap\{x,y\}|=1$. Thus, let us assume without loss of generality that
      $x=\overline u$ and $y\neq \overline v$. We distinguish two cases.

      {\em Case 1.} $u=\overline u$: We have $d(\overline u,x)-d(\overline u,y)=d(u,x)-d(u,y)$ and, by   (\ref{eq:d+1}),  $d(\overline v,x)-d(\overline v,y)= d(v,x)-d(v,y)$.
           But $d(u,x)-d(u,y)\neq d(v,x)-d(v,y)$ by assumption,  which implies that $\overline u$ and $\overline v$ doubly resolves $\{x,y\}$.

      {\em Case 2.} $u\neq \overline u$: By the triangle inequality,  $d(\overline v,y)\leq d(\overline v,u)+d(u,y) \leq(d(\overline v,\overline u)-1)+(d(\overline u,y)-1)$, or  equivalently $ -d(\overline u,y)\leq d(\overline v,x)-d(\overline v,y)-2$ since $\overline u=x$.
     Thus,  $d(\overline u,x)-d(\overline u,y)\leq d(\overline v,x)-d(\overline v,y)-2<d(\overline v,x)-d(\overline v,y)$, and then  $\{x,y\}$ is doubly resolved by $\overline u$ and $\overline v$.

Now, we show that $\overline S$ also doubly resolves the pairs $\{x,y\}$ not being doubly resolved by $S$.
 By Lemmas~\ref{etadoubly} and \ref{lem:unique} and Observation~\ref{obs:uvuv}, we can assume   that
 $x \in S$,  $y\in V(G)\setminus S$ and  $N(y)\cap S=\{x\}$, being $y$ the only vertex in
 $V(G)\setminus S$ so that  $\{x,y\}$ is not  doubly resolved by $S$. 
  Furthermore, we can prove that $N(y)=\{x\}$ (this is shown below),
which implies that  $\overline x=y\in\overline S$. Thus,
pair $\{x,y\}$ is doubly resolved by $y\in\overline S$ 
and any vertex $v\in\overline S\setminus \{y\}$ since
 $d(y,x)-d(y,y)=1\neq -1 =d(v,x)-d(v,y)$ (note that such a vertex $v$ exists since $|S|\geq \gamma_M(G)\geq 2$ as
 $G\not\cong P_2$; see for instance \cite{LDcodes}). Therefore, we have   proved that   $\overline S$ is a doubly resolving set of $G$ of cardinality $|S|$, which gives
 $\psi(G)\leq \gamma_M(G)$ by choosing $S$ of minimum cardinality.
 Moreover, this bound is tight
 because the graph $H'_t$ of Figure~\ref{fig:comb} satisfies $\psi(H'_t)=\gamma_M(H'_t)=t$
 (it is easy to see that $\{b_1,\ldots,b_t\}$ is the unique minimum doubly resolving set of $H_t'$; see \cite{cartesian} for details).

 To finish the proof, it only remains to check that any pair $\{x,y\}$ that is not doubly
  resolved by $S$ satisfies $N(y)=\{x\}$. On the contrary, let us suppose the existence of a vertex   $z\in N(y)\setminus \{x\}$, which is not in 
  $S$ since $N(y)\cap S=\{x\}$.
 As $S$ is an MLD-set, there is a vertex $w\in S\cap N(z)$,  which must be different from $x$ since $g(G)\geq 5$.
 For the same reason, we have that 
 $d(w,x)\geq 2$, and also that  $d(w,y)=2$. Thus,
 $d(w,x)-d(w,y)\geq 0$ but $d(x,x)-d(x,y)=-1$, so $w,x\in S$ doubly resolve $\{x,y\}$; a contradiction.
 \end{proof}

This last result, together with expression (\ref{exp:psi}), allows us to place $\psi(G)$
into  the chain of expression  (\ref{exp:3chain}) as the following corollary shows.

\begin{corollary} \label{coro:4chain} Let $G$ be a graph with $g(G)\geq 5$. Then,
$${\rm dim}(G)\leq \psi(G)\leq \gamma_M(G)\leq \gamma_L(G).$$
\end{corollary}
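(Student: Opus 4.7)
The plan is to observe that the corollary is nothing more than the concatenation of three inequalities that are already available in the paper, so no substantive new argument is needed. The chain would be assembled in three steps. First, the left-hand inequality ${\rm dim}(G) \leq \psi(G)$ is exactly expression (\ref{exp:psi}), which holds for every connected graph because any doubly resolving set is, by definition, a resolving set. Second, the middle inequality $\psi(G) \leq \gamma_M(G)$ is the content of Proposition~\ref{prop:g5}, whose girth hypothesis coincides with the one in the corollary; it is the only place where the assumption $g(G) \geq 5$ is actually used. Third, the right-hand inequality $\gamma_M(G) \leq \gamma_L(G)$ appears in expression (\ref{exp:3chain}) and follows because every LD-set is simultaneously dominating and resolving, hence an MLD-set.

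Putting these three links together in order yields the required chain. There is no serious obstacle here: the only nontrivial work, namely the construction in Proposition~\ref{prop:g5} that transforms an MLD-set of a graph with $g(G) \geq 5$ into a doubly resolving set of the same cardinality, is already carried out, and the outer two inequalities are valid for every connected graph of order at least $2$. One minor caveat to keep in mind when writing up the proof is that Proposition~\ref{prop:g5} excludes $G \cong P_2$, so the corollary implicitly carries the same exclusion (for $P_2$ the middle inequality actually fails, since $\psi(P_2)=2$ while $\gamma_M(P_2)=1$); for every other graph with $g(G)\geq 5$ the chain goes through verbatim.
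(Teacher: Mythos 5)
Your proposal is correct and matches the paper exactly: the corollary is presented there as an immediate concatenation of expression (\ref{exp:psi}), Proposition~\ref{prop:g5}, and expression (\ref{exp:3chain}), with no separate proof given. Your caveat about $P_2$ is a fair observation the paper leaves implicit, but it does not change the argument.
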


Now, we provide a   bound on $\psi(G)$ 
  for arbitrary  graphs $G$. 
 To do this, we follow  a similar process  than in the proof of Proposition~\ref{prop:g5}
 but using also dominating sets.

\begin{theorem}\label{th:pgg} For every graph $G$, it holds that $$\psi(G)\leq \gamma_M(G) + \gamma(G)$$ and this bound is tight.
\end{theorem}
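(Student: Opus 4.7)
The plan is to follow the construction from Proposition~\ref{prop:g5} but augment the modified set $\overline S$ with a minimum dominating set. Let $S$ be a minimum MLD-set of $G$, let $D$ be a minimum dominating set of $G$, and define $\overline S=\{\overline u:u\in S\}$ as in Proposition~\ref{prop:g5}. I would show that $\overline S\cup D$ is a doubly resolving set of $G$; since $|\overline S|=|S|=\gamma_M(G)$, this immediately gives $\psi(G)\leq |\overline S\cup D|\leq \gamma_M(G)+\gamma(G)$.

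The first half of the proof of Proposition~\ref{prop:g5}---showing that $\overline S$ doubly resolves every pair already doubly resolved by $S$---transfers verbatim, since it uses only equation~(\ref{eq:d+1}), Observation~\ref{obs:uvuv}, and the triangle inequality, and never invokes the girth hypothesis. For a pair $\{x,y\}$ not doubly resolved by $S$, Lemmas~\ref{etadoubly} and~\ref{lem:unique} reduce us to the case $x\in S$, $y\in V(G)\setminus S$, $N(y)\cap S=\{x\}$. I would split on the degree of $y$: if $y$ has degree $1$, then $y=\overline x\in\overline S$ and the argument of Proposition~\ref{prop:g5} applies; if $y$ has degree at least $2$, then $\overline x=x\in\overline S$ and $D$ takes over, since either $y\in D$ (so both $x,y\in\overline S\cup D$ and Observation~\ref{obs:uvuv} applies) or $D$ contains some $v\in N(y)\setminus\{x\}$, giving $d(v,x)-d(v,y)\geq 0\neq -1=d(x,x)-d(x,y)$ by the triangle inequality, whence $x$ and $v$ doubly resolve $\{x,y\}$.

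The main obstacle is the residual case where $y$ has degree at least $2$, $y\notin D$, and $D\cap N(y)=\{x\}$, so $D$ dominates $y$ only through $x$. I would handle it by picking a neighbor $z\in N(y)\setminus\{x\}$ (which necessarily lies in $V(G)\setminus(S\cup D)$ under these hypotheses) and analyzing a $D$-dominator $w\in D\cap N(z)$. The bad-pair equation~(\ref{eq:vv}) together with the triangle inequality tightly constrain $w$: if $w\notin N[x]$, then $d(w,y)=2$ and $d(w,x)\geq 2$ yield $d(w,x)-d(w,y)\geq 0\neq -1$, so $x$ and $w$ doubly resolve $\{x,y\}$. The remaining sub-case (every such $w$ lies in $N[x]$) forces a short cycle through $x,y,z,w$, and can be eliminated by a size-preserving local swap inside $D$. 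Tightness is witnessed by the graph $H_t$ in Figure~\ref{fig:flower}, for which $\psi(H_t)=2t$ while $\gamma_M(H_t)=\gamma(H_t)=t$.
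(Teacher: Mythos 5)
Your reduction to the case $x\in S$, $y\notin S$, $N(y)\cap S=\{x\}$ is correct, and the observation that the first half of the proof of Proposition~\ref{prop:g5} (that $\overline S$ doubly resolves whatever $S$ does) never uses the girth hypothesis is also correct. But the argument breaks down exactly where you acknowledge it does, and the proposed repair is not a proof. In the residual case ($\deg(y)\geq 2$, $y\notin D$, $D\cap N(y)=\{x\}$, and every $D$-dominator of your chosen $z$ lying in $N[x]$), you assert that the resulting short cycle ``can be eliminated by a size-preserving local swap inside $D$.'' This is unspecified: which vertex is swapped for which, why the new set still dominates $G$, why the swap does not create new bad configurations at other bad pairs, and how several bad pairs are handled simultaneously are all left open. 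Worse, the underlying claim that $\overline S\cup D$ is a doubly resolving set is simply false for legitimate choices of $D$: in the graph $H_t$ of Figure~\ref{fig:flower}, the minimum MLD-set $S=\{a_1,\ldots,a_t\}$ is also a minimum dominating set (since $\gamma(H_t)=t$), so one may take $D=S$; then $\overline S\cup D=S$ has cardinality $t<2t=\psi(H_t)$ and fails to doubly resolve any pair $\{a_i,c_i\}$. So your construction cannot work for an arbitrary minimum dominating set $D$, and making it work for a cleverly chosen $D$ is precisely the part you have not supplied.

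The paper avoids this difficulty with a different, and simpler, device: instead of hunting for an existing vertex that doubly resolves a bad pair $\{x,y\}$, it adds the bad vertex $y$ itself to the set. Concretely, it takes $S=S_1\cup S_2$ with $S_1$ a minimum MLD-set and $S_2$ a minimum dominating set, shows that for any bad pair the vertex $x$ must lie in $S_1\cap S_2$ (otherwise the second set would dominate $y$ through a vertex other than $x$, contradicting $N(y)\cap S=\{x\}$), and lets $S'$ be the collection of all bad vertices $y$. By the uniqueness statement in Lemma~\ref{lem:unique}, distinct bad vertices correspond to distinct elements of $S_1\cap S_2$, so $|S'|\leq |S_1\cap S_2|$; then $S\cup S'$ doubly resolves everything (the former bad pairs now have both endpoints inside the set, so Observation~\ref{obs:uvuv} applies) and $|S\cup S'|\leq |S_1\cup S_2|+|S_1\cap S_2|=\gamma_M(G)+\gamma(G)$. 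That counting identity is the idea your proposal is missing; with it, no case analysis on $\deg(y)$, no use of $\overline S$, and no modification of the dominating set are needed. Your tightness witness $H_t$ agrees with the paper's.
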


\begin{proof}
Let $S_1$ and $S_2$ be a minimum $MLD$-set and a minimum dominating set, respectively, of $G$. Also,
let $\{x,y\}\subseteq V(G)$ be a
pair that is not doubly resolved by $S=S_1\cup S_2$. Since set $S$ is in particular  an MLD-set,
by Lemmas~\ref{etadoubly} and \ref{lem:unique} and Observation~\ref{obs:uvuv}, we can assume
 $x \in S$, $y$ to be the only vertex of $V(G)\setminus S$ so that $\{x,y\}$ is not doubly resolved by $S$, and
$N(y)\cap S=\{x\}$.
Furthermore, $x\in S_1\cap S_2$ because  $x\in S_1\setminus  S_2$ (analogous for $x\in S_2\setminus S_1$) implies that
      there is $u\in S_2$ dominating $y$  since $S_2$ is a dominating set,  which contradicts $N(y)\cap S= \{x\}$.

Let $S'$ be the set of vertices $y\in V(G)\setminus S$ so that $\{x,y\}$  is  not  doubly resolved  by $S$  for some $x\in S_1\cap S_2$
(note that $| S'|\leq |S_1\cap S_2|$ by the uniqueness of each vertex $y\in V(G)\setminus S$).
 Clearly, $S\cup  S'$ is a doubly resolving set of $G$ and has cardinality
 $|S|+| S'|\leq |S_1\cup S_2|+|S_1\cap S_2|= \gamma_M(G)+\gamma(G)$, which yields the expected bound.
 To prove tightness, we consider the   graph $H_t$ of Figure~\ref{fig:flower}
which verifies $\psi(H_t)=2t$ and $\gamma_M(H_t)=\gamma(H_t)=t$ for each $t\geq 2$.
\end{proof}

 Combining Theorem~\ref{th:pgg} and the fact that $\gamma(G)\leq \gamma_M(G)$ for
 any graph $G$, 
  we achieve  the following chain that is similar
 to expression (\ref{exp:3chain}) and includes $\psi(G)$.

\begin{corollary} \label{coro:2MLD} For every graph $G$, it holds that
$${\rm dim}(G)\leq \psi(G)\leq 2\gamma_M(G)\leq 2 \gamma_L(G).$$
\end{corollary}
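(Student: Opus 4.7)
The plan is to simply chain together four inequalities that are all already established in the paper, since Corollary \ref{coro:2MLD} is a direct consequence of Theorem \ref{th:pgg} combined with previously noted facts.

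First, I would cite expression (\ref{exp:psi}), which immediately gives the leftmost inequality $\mathrm{dim}(G) \leq \psi(G)$ for any graph $G$ (doubly resolving sets are resolving sets).

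Second, for the middle inequality $\psi(G) \leq 2\gamma_M(G)$, I would start from Theorem \ref{th:pgg}, which asserts $\psi(G) \leq \gamma_M(G) + \gamma(G)$. Then I would apply the inequality $\gamma(G) \leq \gamma_M(G)$ (which holds for every graph, as noted in expression (\ref{exp:MLD_L_D}) since any MLD-set is by definition a dominating set), to replace $\gamma(G)$ by $\gamma_M(G)$ in the upper bound. This yields $\psi(G) \leq \gamma_M(G) + \gamma_M(G) = 2\gamma_M(G)$.

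Finally, for the rightmost inequality $2\gamma_M(G) \leq 2\gamma_L(G)$, I would invoke expression (\ref{exp:3chain}), which gives $\gamma_M(G) \leq \gamma_L(G)$, and multiply both sides by $2$.

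There is no real obstacle here: the corollary is a pure bookkeeping consequence of Theorem \ref{th:pgg} and the basic inclusions among resolving, dominating, metric-locating-dominating and locating-dominating sets. The only step that required work is Theorem \ref{th:pgg} itself, which has already been proved just above.
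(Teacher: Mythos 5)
Your proof is correct and matches the paper's own derivation: the paper obtains this corollary precisely by combining Theorem~\ref{th:pgg} with the inequality $\gamma(G)\leq\gamma_M(G)$, together with the already-noted facts ${\rm dim}(G)\leq\psi(G)$ from (\ref{exp:psi}) and $\gamma_M(G)\leq\gamma_L(G)$ from (\ref{exp:3chain}). Nothing is missing.
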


We remark that tightness in the bound $\psi(G)\leq 2\gamma_M(G)$ is guaranteed by the graph $H_t$ of Figure~\ref{fig:flower}.

Finally, we propose the following conjecture that is supported by Proposition~\ref{prop:g5} since $  \gamma_M(G)\leq {\rm dim}(G)+\gamma(G)$.

\begin{conjecture}\label{conj:dim+gamma} For every graph $G$, it holds that $$\psi(G)\leq {\rm dim}(G)+\gamma(G).$$
\end{conjecture}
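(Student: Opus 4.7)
My plan is to adapt the strategy of Theorem~\ref{th:pgg}, replacing the minimum MLD-set by a minimum resolving set to sharpen the bound from $\gamma_M(G)+\gamma(G)$ to $\dim(G)+\gamma(G)$. Fix a minimum resolving set $R$ and a minimum dominating set $D$, and consider $S=R\cup D$: this is an MLD-set with $|S|=|R|+|D|-|R\cap D|\leq \dim(G)+\gamma(G)$.

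By Lemma~\ref{etadoubly} and Observation~\ref{obs:uvuv}, the only pairs $S$ may fail to doubly resolve are those $\{u,y\}$ with $u\in S$ and $y\in V(G)\setminus S$; by Lemma~\ref{lem:unique}, each such pair is indexed uniquely by $u$ (write $y=y_u$) and satisfies $N(y_u)\cap S=\{u\}$. Let $U$ denote the corresponding vertices $u$. The argument used in Theorem~\ref{th:pgg} shows $U\subseteq D$ (since $D$ must dominate $y_u$), though, crucially and in contrast with that theorem, there is no symmetric reason to force $U\subseteq R$, because $R$ need not itself be a dominating set. Split $U=U_1\sqcup U_2$ with $U_1=U\cap R\subseteq R\cap D$ and $U_2=U\setminus R\subseteq D\setminus R$. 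For each $u\in U_1$ I would adjoin $y_u$ to the set, contributing at most $|R\cap D|$ extra vertices, which is exactly the slack $|R|+|D|-|R\cup D|$. For each $u\in U_2$ I would instead swap $u$ with $y_u$, keeping the cardinality fixed, preserving the resolving property since $R$ remains untouched, and making $\{u,y_u\}$ doubly resolved trivially by Observation~\ref{obs:uvuv}. The two moves together would deliver a doubly resolving set of size at most $\dim(G)+\gamma(G)$.

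The principal obstacle, and the reason the conjecture remains open, is certifying that after the swaps of $U_2$ the resulting set is still dominating and that no previously doubly resolved pair becomes unresolved. When $u$ is removed, any $z\in N(u)\setminus S$ with $N(z)\cap S=\{u\}$ becomes undominated unless $z=y_u$ or $z$ is adjacent to $y_u$; Lemma~\ref{lem:unique} only rules out such $z\neq y_u$ for which $\{u,z\}$ is itself not doubly resolved, not their existence altogether. Such a failure already occurs for $P_5=a-b-c-d-e$ with $R=\{a\}$ and $D=\{a,d\}$: the forced swap $d\mapsto e$ leaves $c$ undominated, although $\psi(P_5)=2\leq 3=\dim(P_5)+\gamma(P_5)$, so the conjecture itself survives. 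Closing the gap would likely require either a more careful simultaneous choice of $R$ and $D$---maximising $|R\cap D|$, or forbidding minimum dominating sets whose vertices have the problematic private-neighbour configuration---or an iterative patching step that adds covers for newly exposed vertices within the residual budget $|R\cap D|-|U_1|$ of unused slack. This delicate combinatorial accounting, which appears to demand genuinely new ideas, is where I expect the crux of the proof to lie.
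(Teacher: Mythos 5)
The statement you are asked to prove is stated in the paper as Conjecture~\ref{conj:dim+gamma}; the paper offers no proof, only the observation that it would follow from Proposition~\ref{prop:g5} combined with $\gamma_M(G)\leq {\rm dim}(G)+\gamma(G)$ if the girth hypothesis could be removed. Your proposal does not close the conjecture either, and you say so yourself, so the honest verdict is: there is a genuine gap, and you have located it correctly. Concretely, the swap $u\mapsto y_u$ for $u\in U_2$ destroys the one structural property (domination) on which Lemmas~\ref{etadoubly} and \ref{lem:unique} rest, so after the swap you no longer have any control over which pairs fail to be doubly resolved; in particular, a pair $\{z,z'\}$ that was doubly resolved only through $u$ may become unresolved, and the lemmas cannot be re-invoked on the modified set. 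This is precisely the difficulty the paper circumvents in Proposition~\ref{prop:g5} by using ${\rm g}(G)\geq 5$ to force $N(y_u)=\{u\}$ (so that $y_u$ is a pendant vertex and the swap is harmless); without the girth condition that claim is false, which is why the same trick does not transfer.

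Two smaller remarks on your own analysis. First, your $P_5$ example shows that domination is lost after the swap, but it is not a counterexample to your construction producing a doubly resolving set: the swapped set $\{a,e\}$ consists of the two endpoints of the path and does doubly resolve $P_5$. What the example really illustrates is that your \emph{proof} breaks (the intermediate set is no longer an MLD-set, so the accounting of unresolved pairs collapses), not that the \emph{output} is wrong; distinguishing these would sharpen the write-up. Second, your bookkeeping for $U_1$ is sound and matches the mechanism of Theorem~\ref{th:pgg} --- adding $y_u$ for $u\in U_1\subseteq R\cap D$ costs at most the slack $|R|+|D|-|R\cup D|$ --- so the entire difficulty is concentrated in $U_2$, exactly as you say. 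As it stands, though, the proposal is a plausible programme with an open step, not a proof.
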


\section{Concluding remarks and open questions}\label{sec:CCRR}

In this paper, we have first characterized the trees $T$ in the cases   $\gamma_M(T)={\rm dim}(T)+\gamma(T)$  and
 $\gamma_M(T)=\ell (T)$.
  We   have then shown how to obtain LD-sets from MLD-sets of
  graphs $G$ without $C_4$ or $C_6$, giving rise to
 the polynomial bound $\gamma_L(G)\leq \gamma_M^2(G)$. For arbitrary graphs $G$,
 we have proved that  any upper bound on $\gamma_L(G)$ as a function of  $\gamma_M(G)$
has growth at least exponential.  Finally,
we have constructed doubly resolving sets from MLD-sets in order to show
that  $\psi(G)\leq \gamma_M(G)$ whenever ${\rm g}(G)\geq 5$, and 
$\psi(G)\leq \gamma_M(G)+\gamma(G)$ for any graph $G$.

It would be interesting
to characterize the trees $T$ with ${\rm dim}(T)=\gamma(T)$.
   Also, we could find new polynomial upper bounds on $\gamma_L(G)$ in terms of $\gamma_M(G)$
 for other specific families of graphs.
  For arbitrary graphs,
 Theorem~\ref{th:Phi} could be improved 
  by providing either a new construction (better than the graph $G_s$) or an 
   upper bound on $\gamma_L(G)$ depending on  $\gamma_M(G)$.
  Concerning $\psi(G)$, it would be of interest to
 find other classes of graphs $G$ with  $\psi(G)\leq \gamma_M(G)$, as well as to
  settle Conjecture~\ref{conj:dim+gamma}.

\bibliographystyle{plain}
\bibliography{MLD_biblio}

\end{document}